\documentclass{amsart}[11pt]

\setlength{\textwidth}{6.7in}
\setlength{\oddsidemargin}{-.2in}
\setlength{\evensidemargin}{-0.2in}
\setlength{\textheight}{9in}
\setlength{\topmargin}{0in}
\usepackage[utf8]{inputenc}
\usepackage{amsmath,amsthm,amsfonts,amssymb,graphicx,amscd}
\usepackage[all,cmtip]{xy}
\usepackage{tikz-cd}
\usepackage{graphicx}
\usepackage{stmaryrd}
\usepackage[utf8]{inputenc}
\usepackage[english]{babel}
\usepackage{tabstackengine}
\setstackgap{L}{.75\baselineskip}
\usepackage{tensor}
\usepackage{array}
\usepackage{hyperref}
\hypersetup{pdfborder=0 0 0}

\newcommand{\Hom}{\operatorname{Hom}}
\newcommand{\shom}{\operatorname{\underline{Hom}}}
\newcommand{\End}{\operatorname{End}}
\newcommand{\send}{\operatorname{\underline{End}}}
\newcommand{\smod}{\operatorname{\underline{mod}}}
\newcommand{\im}{\operatorname{Im}}
\newcommand{\Ext}{\operatorname{Ext}}
\newcommand{\soc}{\operatorname{soc}}

\newtheorem{thm}{Theorem}
\numberwithin{thm}{section}

\newtheorem{prop}{Proposition}
\numberwithin{prop}{section}

\newtheorem{rmk}{Remark}
\numberwithin{rmk}{section}

\subjclass[2020]{16G10,16G20,16G60,16G70}

\keywords{Stable endomorphism ring and universal deformation ring and symmetric algebras and algebras of finite representation type}

\begin{document}

\title[On universal deformation rings]{{\Large On universal deformation rings of modules over
a certain class of symmetric algebras of finite representation type}}

\author{Jhony F. Caranguay-Mainguez}
\address{Universidad de Antioquia, Instituto de Matem\'aticas}
\email{jhony.caranguay@udea.edu.co}

\author{Pedro Rizzo}
\address{Universidad de Antioquia, Instituto de Matem\'aticas}
\email{pedro.hernandez@udea.edu.co}

\author{Jos\'e A. V\'elez-Marulanda}
\address{Valdosta State University, Department of Applied Mathematics and Physics}
\email{javelezmarulanda@valdosta.edu}
\address{Fundaci\'on Universitaria Konrad Lorenz, Departamento de Matemáticas e Ingenier\'ias}
\email{josea.velezm@konradlorenz.edu.co}

\maketitle

\begin{abstract}
    Let $\Bbbk$ be an algebraically closed field. Recently, K. Erdmann classified the symmetric $\Bbbk$-algebras $\Lambda$ of finite representation type such that every non-projective module $M$ has period dividing four. The goal of this paper is to determine the indecomposable modules $M$ over these class of algebras $\Lambda$ whose stable endomorphism ring is isomorphic to $\Bbbk$, and then calculate their corresponding universal deformation rings (in the sense of F. M. Bleher and the third author).
\end{abstract}

\section{Introduction}

In this note, we are considering periodic finite-dimensional algebras. The classification of such algebras has increased lately among many authors. In particular, Erdmann et al. in \cite{ehs} discussed
\textit{tame symmetric algebras of period four} in order to achieve a general classification of such algebras. Moreover, K. Erdmann in \cite{kean} characterized symmetric algebras of finite type that have the property that every non-projective indecomposable module has $\Omega$-period dividing four. These classes of algebras include certain types of group algebras, weight surface algebras as well as their generalizations (i.e. the weighted generalized triangulation algebras). We refer to reader to \cite{ks1}, \cite{ks2}, \cite{ss} for more details.

On the other hand, the inspiration and foundation for the generalization of deformation theory (à la Mazur) for finitely generated modules over algebras of finite dimension can be traced back to representations of group algebras coming from finite groups. More precisely, in \cite{bv}, F. M. Bleher and the third author developed a deformation theory of finitely generated modules over finite-dimensional algebras, which was inspired by Mazur's deformation theory of Galois representations for profinite groups. They in particularly focused on self-injective algebras. This work carried significant implications for Frobenius algebras, which constitute an essential subclass of self-injective algebras and which also include the class of symmetric algebras.

The main objective of this paper is to determine the $\Lambda$-modules $M$ over the algebras $\Lambda$ characterized in \cite{kean} whose stable endomorphism ring is isomorphic to $\Bbbk$, and then calculate the corresponding universal deformation ring (i.e. apply \cite[Thm. 1.1]{bv}). To achieve this, we use results from the representation theory of finite-dimensional algebras, such as Auslander–Reiten quivers, stable equivalences, canonical homomorphisms and combinatorial descriptions of modules. Additionally, in our calculations we incorporate recent techniques developed in \cite{rv}.

On the other hand, F. M. Bleher and D. Wackwitz obtained in \cite{bw} equivalent results to the ones obtained in this paper by using stable equivalences of Morita type (in the sense of Brou\'e) induced by derived equivalences between self-injective algebras. More precisely, they use derived equivalences between Brauer tree algebras and certain class of symmetric Nakayama algebras (for more details see Remark \ref{rmk:1}). Our contribution, however, focuses on a direct and explicit calculation for these specific cases, avoiding the detour through derived equivalences. It is noteworthy that in \cite{crv} the authors use derived equivalences as a crucial tool to calculate the universal deformation rings for a certain family of modules over a wider class of Brauer graph algebras (in the sense of \cite{sib}), namely, the generalized Brauer tree algebras (in the sense of \cite{msw}).

The paper's main result is the following, where the algebras (AE1), (AE2) or (AE3) correspond to K. Erdmann classification in \cite{kean} (see Section \ref{sec:pre}) for a symmetric $\Bbbk$-algebra of finite representation type with period $4$.

\begin{thm}\label{thm:main}
Let $M$ be a non-projective indecomposable module over $\Lambda$.
\begin{enumerate}
\item If $\Lambda$ is the algebra (AE1), then $\send_{\Lambda}(M)\cong \Bbbk$ if and only if $M$ is in the $\Omega$-orbit of a module in the mouth of the unique exceptional tube of ${_s}\Gamma_{\Lambda}$, namely, $M\cong V_0$ or $M\cong V_{m-1}$. In this situation, we have $\Ext_{\Lambda}^1(M,M)=\Bbbk$ and $R(\Lambda,M)\cong \Bbbk \llbracket x\rrbracket/\langle x^{m+1}\rangle$.

\item If $\Lambda$ is the algebra (AE2), then the following statements hold.
\begin{enumerate}
    \item[$i)$] $\send_{\Lambda}(M)\simeq \Bbbk$ if and only if $M\cong M_j$ or $M\cong N_j$ for some $j\in \{0,1,2m-2,2m-1\}$.
    \item[$ii)$] Suppose that $\send_{\Lambda}(M)\cong \Bbbk$. Let $j$ be the element of $\{0,1,2m-2,2m-1\}$ such that $M\cong M_j$. Then,
\begin{equation*}
\Ext^1_{\Lambda}(M,M)= \begin{cases}
0 & \text{if $j\in \{0,2m-1\}$,}\\
\Bbbk & \text{if $j\in \{1,2m-2\}$,}
\end{cases}
\end{equation*}
     and the universal deformation ring of $M$ is given by
\begin{equation*}
R(\Lambda,M)= \begin{cases}
\Bbbk & \text{if $j\in \{0,2m-1\}$,}\\
\Bbbk \llbracket x \rrbracket /\langle x^m\rangle & \text{if $j\in \{1,2m-2\}$.}
\end{cases}
\end{equation*}

\end{enumerate}

\item If $\Lambda$ is the algebra (AE3), then the following statements hold.
\begin{enumerate}
    \item[$i)$] $\send_{\Lambda}(M)\simeq \Bbbk$ if and only if $M$ is isomorphic of one of the modules $U_0$, $V_1$, $X_m$, $Y_m$, $U_{m-1}$, $V_m$, $X_1$, $Y_1$.
    \item[$ii)$] Suppose that $\send_{\Lambda}(M)\simeq \Bbbk$. Then,
\begin{equation*}
\Ext^1_{\Lambda}(M,M)= \begin{cases}
0 & \text{if $M$ is isomorphic to one of the modules $U_0$, $V_1$, $X_m$, $Y_m$,}\\
\Bbbk & \text{if $M$ is isomorphic to one of the modules $U_{m-1}$, $V_m$, $X_1$, $Y_1$,}
\end{cases}
\end{equation*}
     and the universal deformation ring of $M$ is given by
\begin{equation*}
R(\Lambda,M)= \begin{cases}
\Bbbk & \text{if $M$ is isomorphic to one of the modules $U_0$, $V_1$, $X_m$, $Y_m$,}\\
\Bbbk \llbracket x \rrbracket /\langle x^m\rangle & \text{if $M$ is isomorphic to one of the modules $U_{m-1}$, $V_m$, $X_1$, $Y_1$.}
\end{cases}
\end{equation*}

\end{enumerate}

\end{enumerate}
\end{thm}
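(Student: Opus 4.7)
The plan is to treat each of the three algebras (AE1), (AE2), (AE3) in turn, following the same three-step strategy in each case: (a) use the explicit description of the stable Auslander--Reiten quiver ${}_s\Gamma_\Lambda$ from Erdmann's classification in \cite{kean} to enumerate the $\Omega$-orbits of non-projective indecomposable $\Lambda$-modules; (b) identify those indecomposable $M$ with $\send_\Lambda(M)\cong\Bbbk$ by a direct computation using combinatorial (string/tube) descriptions; and (c) for each such $M$, compute $\Ext^1_\Lambda(M,M)$ and then apply \cite[Thm.~1.1]{bv} to determine $R(\Lambda,M)$.

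For step (a), in case (AE1) the algebra has a unique exceptional tube of rank $m$, so it suffices to single out the two modules $V_0, V_{m-1}$ on the mouth of this tube and argue via $\Omega$-invariance of $\send_\Lambda(-)$ that these represent the only candidate $\Omega$-orbits. In cases (AE2) and (AE3) the stable AR-quiver has a more refined shape, and I would list the modules $M_j, N_j$ (resp.\ $U_i, V_i, X_i, Y_i$) indexed by the stable AR-components, then use the reduction that $\send_\Lambda(\Omega M)\cong\send_\Lambda(M)$ to restrict attention to one representative per $\Omega$-orbit. For step (b), to verify $\send_\Lambda(M)\cong\Bbbk$ I would compute $\End_\Lambda(M)$ from the string/combinatorial presentation of $M$ and then quotient out endomorphisms factoring through projectives; the claim ``only scalars survive'' follows from the explicit basis of $\End_\Lambda(M)$ together with the description of the projective-injective indecomposables of $\Lambda$. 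The converse (that no other indecomposable satisfies the condition) is obtained by exhibiting, for every other $M$, a non-scalar endomorphism that does not factor through a projective, using the techniques recently developed in \cite{rv}.

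For step (c), once the candidate modules are known, $\Ext^1_\Lambda(M,M)$ is computed from a minimal projective presentation $P_1\to P_0\to M\to 0$: the Hom-space $\Hom_\Lambda(P_1,M)$ modulo the image of $\Hom_\Lambda(P_0,M)$ can be read off from the string diagrams. The dichotomy $\Ext^1=0$ vs.\ $\Ext^1=\Bbbk$ in parts (2) and (3) is expected to reflect whether $M$ lies at the ``boundary'' of a tube (where it is $\Omega$-periodic but has no non-trivial self-extension) or one step in (where a single lift is visible). Once $\dim_\Bbbk\Ext^1_\Lambda(M,M)$ is determined, \cite[Thm.~1.1]{bv} shows that $R(\Lambda,M)$ is a quotient of $\Bbbk$ when $\Ext^1=0$ (giving $R(\Lambda,M)=\Bbbk$) and a quotient of $\Bbbk\llbracket x\rrbracket$ when $\Ext^1=\Bbbk$; in the latter situation I must determine the precise nilpotency $x^{m+1}$ (case (AE1)) or $x^m$ (cases (AE2)(ii), (AE3)(ii)).

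The main obstacle will be this last point: pinning down the exact relation satisfied by $x$ in $R(\Lambda,M)$. The plan here is to construct an explicit lift of $M$ to a module $M_n$ over $\Lambda\otimes_\Bbbk \Bbbk[x]/\langle x^{n}\rangle$ step by step, using the $\Omega$-periodicity (period four) to iterate the lifting procedure as in \cite{rv}, and to show that the obstruction to further lifting appears exactly at order $m+1$ (resp.\ $m$). Concretely, the tube rank $m$ of ${}_s\Gamma_\Lambda$ controls how many times the module ``wraps around'' the mouth of the tube under repeated deformation, and this is what forces the relation $x^{m+1}=0$ in (AE1) and $x^m=0$ in (AE2), (AE3). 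Verifying both that this lift exists up to order $m$ (resp.\ $m-1$) and that it is genuinely obstructed at the next step — using an Ext-obstruction calculation — is the technical heart of the argument.
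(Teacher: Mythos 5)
Your plan follows essentially the same route as the paper: reduce to one representative per $\Omega$-orbit using $\send_{\Lambda}(M)\cong\send_{\Lambda}(\Omega(M))$, decide $\send_{\Lambda}(M)\cong\Bbbk$ by exhibiting (or excluding) non-scalar canonical endomorphisms between string modules, compute $\Ext^1_{\Lambda}(M,M)$ (the paper uses $\shom_{\Lambda}(\Omega(M),M)$ rather than a projective presentation, which is equivalent here), and then pin down the nilpotency degree of $x$ in $R(\Lambda,M)$ by an iterated-lifting argument in the spirit of \cite{rv}. Two points deserve attention. First, the decisive step you label the ``technical heart'' is left at the level of a heuristic (``the tube rank controls how many times the module wraps around''); the paper instead verifies the concrete hypotheses of Theorem~\ref{anote}, namely it produces an explicit chain $V_0\subset V_1\subset\cdots\subset V_N$ inside the tube with maps $\sigma_l=\iota_l\epsilon_l$ satisfying $\ker(\sigma_l)\cong V_0$ and $\im(\sigma_l^{\,l})\cong V_0$, together with $\dim_{\Bbbk}\Hom_{\Lambda}(V_N,V_0)=1$ and $\Ext^1_{\Lambda}(V_N,V_0)=0$ (in (AE1) the chain ends at the projective $P(0)$, giving $N=m$ and hence $x^{m+1}$; in (AE2) and (AE3) it has length $m$, giving $x^m$). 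Without checking these termination conditions your obstruction argument does not yet yield the exact exponent, and your intuition that the $\Ext^1$-dichotomy reflects ``mouth versus one step in'' actually fails for (AE1), where both mouth modules have $\Ext^1_{\Lambda}(M,M)\cong\Bbbk$. Second, two small misattributions: the tool for classifying (and excluding) endomorphisms factoring through projectives is Krause's basis of canonical homomorphisms \cite{kra}, not \cite{rv}; and \cite[Thm.~1.1]{bv} (Theorem~\ref{thm:defor}) only gives existence of $R(\Lambda,M)$ and its $\Omega$-invariance, while the identification of the ring itself comes from \cite[Thm.~1.1]{rv}. With these corrections your plan matches the paper's proof.
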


This paper is organized as follows. In Section \ref{sec:pre} we introduce the basic material and technical framework which allow us to establish in the next section a detailed description of our main calculations. In Section \ref{sec:main}, we will prove each part of Theorem~\ref{thm:main} separately: part (1) will be proven in Proposition~\ref{tmmod}, part (2) in Proposition~\ref{tc2eser}, and part (3) in Proposition~\ref{terd3}.

This article constitutes part of the development of the doctoral dissertation of the first author under the supervision of the other two authors.

\section{Preliminaries}\label{sec:pre}
In this section we introduce the basic material to achieve the main results and to facilitate the accompaniment of the reader in our calculations which will be presented in the next section.

Let $\Bbbk$ be an algebraically closed field and $\Lambda$ a finite dimensional $\Bbbk$-algebra. We assume that all the $\Lambda$-modules are finitely generated modules and from the right side. 

\subsection{Over certain class of symmetric algebras}\label{subsec:class}
For all modules $V$ over $\Lambda$, the \textit{syzygy of} $V$, denoted by $\Omega(V)$, is the kernel of a projective cover of $V$. More generally, for a minimal projective resolution of $V$:
$$
\cdots\longrightarrow P_{n+1}\stackrel{d_{n+1}}{\longrightarrow} P_n\longrightarrow\cdots\longrightarrow P_1\stackrel{d_1}{\longrightarrow} P_0 \stackrel{d_0}{\longrightarrow} V\longrightarrow 0
$$
we define the \textit{$n$-th syzygy} of $V$ by $\Omega^n(V):=\ker d_{n-1}$.
We say that $V$ is \textit{periodic} if $\Omega^n (V)\cong V$, for some positive integer $n$. The smallest positive integer $d$ with $\Omega^d (V)\cong V$ is called the \textit{period of} $V$. On the other hand, the $\Bbbk$-algebra $\Lambda$ is \textit{periodic} if it is periodic as a $\Lambda^e$-module, where $\Lambda^e$ is the \textit{enveloping algebra} of $\Lambda$, i.e. the tensor product $\Lambda^{op}\otimes_{\Bbbk} \Lambda$. For more details about periodic algebras and periodic modules see \cite{s}.

Assume that $\Lambda$ is basic and connected. In \cite{kean}, K. Erdmann proved that $\Lambda$ is a symmetric $\Bbbk$-algebra of finite representation type with period $4$ if and only if it is isomorphic to one of the following algebras.

\begin{enumerate}
    \item[\textbf{(AE1)}] $\Bbbk[x]/\langle x^m\rangle$, where $m\in \mathbb{Z}^+$;
    \item[\textbf{(AE2)}] the bound quiver algebra $\Bbbk Q/I$, where
$$
\xymatrix{
0 \ar@/^1pc/[rr]^{\alpha}
& & 1 \ar@/^1pc/[ll]^{\beta}}
$$
and $I=\langle (\alpha \beta)^m\alpha,(\beta\alpha)^m\beta \rangle$ for some $m\geq 1$;
    \item[\textbf{(AE3)}] the bound quiver algebra $\Bbbk Q/I$, where
$$
\xymatrix{
0 \ar@(ul,dl)[]_{\rho} \ar@/^1pc/[rr]^{\alpha}
&& 1 \ar@/^1pc/[ll]^{\beta}}
$$
and $I=\langle \rho\alpha,\beta\rho,\alpha\beta-\rho^m \rangle$ for some $m\geq 2$.
\end{enumerate}

\subsection{An overview of deformation theory of finitely-generated modules over finite dimensional algebras}\label{subsec:def}
Following \cite{bv}, we denote by $\widehat{\mathcal{C}}$ the category whose objects are commutative complete local Noetherian $\Bbbk$-algebras with residue field $\Bbbk$. Let $V$ and $W$ be $\Lambda$-modules. We denote the \textit{space of stable homomorphisms} by $\shom_{\Lambda}(V,W)$, which is defined to be the quotient $\Hom_{\Lambda}(V,W)/\mathcal{P}_{\Lambda}(V,W)$, where $\mathcal{P}_{\Lambda}(V,W)$ is the $\Bbbk$-vector subspace of $\Hom_{\Lambda}(V,W)$ generated by all the module homomorphisms that factor through projective modules. In particular, we denote by $\End_{\Lambda}(V)=\Hom_{\Lambda}(V,V)$ the endomorphism ring of $V$ and by $\send_{\Lambda}(V)$ the \textit{stable endomorphism ring} of $V$. 

Let $R\in\text{Obj}(\widehat{\mathcal{C}})$. A \emph{lift of $V$ over $R$} is a pair $(M,\phi)$, where $M$ is a finitely generated right $R\otimes_{\Bbbk}\Lambda$-module which is free over $R$, and $\phi:\Bbbk\otimes_R M\rightarrow V$ is an isomorphism of $\Lambda$-modules.
An isomorphism $f:(M,\phi)\rightarrow(M',\phi')$ between lifts is an isomorphism $f:M\rightarrow M'$ of $R\otimes_{\Bbbk}\Lambda$-modules such that $\phi'\circ (1_{\Bbbk}\otimes f)=\phi$, where $1_{\Bbbk}$ denotes the identity over $\Bbbk$. A \emph{deformation of $V$ over $R$} is defined to be an isomorphism class of lifts $[(M,\phi)]$ of $V$ over $R$. We denote by $\text{Def}_{\Lambda}(V,R)$ the set of all deformations of $V$ over $R$. This set has a functorial behavior, that is, we define the covariant \emph{deformation functor} $\widehat{F}_V:\widehat{\mathcal{C}}\rightarrow\textbf{Sets}$ as follows: for any $R\in \text{Obj}(\widehat{\mathcal{C}})$, we put $\widehat{F}_V(R):=\text{Def}_{\Lambda}(V,R)$ and for any morphism $\alpha:R\rightarrow R'$ in $\widehat{\mathcal{C}}$, the function $\widehat{F}_V(\alpha):\text{Def}_{\Lambda}(V,R)\rightarrow \text{Def}_{\Lambda}(V,R')$ is defined by $\widehat{F}_V(\alpha)([(M,\phi)])=[(R'\otimes_{R,\alpha}M,\phi_{\alpha})]$ where $\phi_{\alpha}:\Bbbk\otimes_{R'}(R'\otimes_{R,\alpha}M)\rightarrow V$ is obtained by the composition of isomorphisms $\Bbbk\otimes_{R'}(R'\otimes_{R,\alpha}M)\simeq \Bbbk\otimes_{R}M\stackrel{\phi}{\rightarrow} V$. In the case that there exists a unique object $R(\Lambda,V)$ in $\widehat{\mathcal{C}}$ such that $\widehat{F}_V$ is naturally isomorphic to the functor $\text{Hom}_{\widehat{\mathcal{C}}}(R(\Lambda,V),-)$, i.e. $R(\Lambda,V)$ represents $\widehat{F}_V$, we say that $R(\Lambda,V)$ is the \emph{universal deformation ring} of $V$. An important consequence under this situation is that there exists a deformation $[(U(\Lambda, V),\phi_{U(\Lambda, V)})]$, which we call the \emph{universal deformation of $V$}, such that, given any $R\in \text{Obj}(\widehat{\mathcal{C}})$ and any deformation $[(M,\phi)]$ of $V$, there exists a unique morphism $\alpha:R(\Lambda, V)\rightarrow R$ such that $\widehat{F}_V(\alpha)([\left(U(\Lambda, V),\phi_{U(\Lambda, V)}\right)])=[(M,\phi)]$. 

Moreover, the \textit{tangent space} $t_V$ \textit{of} $F_V$ is $t_V=F_V(\Bbbk\llbracket x \rrbracket/\langle x^2\rangle)$. It follows from \cite[Lemma 2.10]{schl} that $t_V$ is a vector space, which is isomorphic to the first group of extensions $\Ext_{\Lambda}^1(V,V)$ (see \cite[Proposition 2.1]{bv}). Besides, if $V$ admits a universal deformation ring then, $R(\Lambda,V)=\Bbbk$ if $t_V=0$ and $R(\Lambda,V)$ is a quotient of $\Bbbk \llbracket x_1,\ldots,x_{\dim_{\Bbbk}(t_V)}\rrbracket$ if $\dim_{\Bbbk}(t_V)>0$ (see \cite[pg. 223]{bw}).

Recall that $\Lambda$ is \emph{self-injective} if the right $\Lambda$-module $\Lambda_{\Lambda}$ is injective. On the other hand, we say that $\Lambda$ is \emph{Frobenius} if there exists an isomorphism as right $\Lambda$-modules between $\Lambda_{\Lambda}$ and $D(\Lambda)_{\Lambda}$, where $D(\Lambda)=\text{Hom}_{\Bbbk}(\Lambda_{\Lambda},\Bbbk)$. Recall also that $\Lambda$ is called \emph{symmetric} algebra if $\Lambda$ is endowed with a non-degenerate, associative bilinear form $B:\Lambda\times\Lambda\rightarrow\Bbbk$ such that $B(a,b)=B(b,a)$, for all $a,b\in\Lambda$. By \cite[Chapter IV, Proposition 3.8]{sy}, any Frobenius algebra is self-injective and every symmetric algebra is a Frobenius algebra.

The following theorem follows from \cite[Theorem 2.6]{bv}.
\begin{thm}\label{thm:defor}
Assume that $\Lambda$ is self-injective and suppose that $V$ is a finitely generated $\Lambda$-module whose stable endomorphism ring $\send_{\Lambda}(V)$ is isomorphic to $\Bbbk$. Then, the following conditions hold:
\begin{enumerate}
\item[i)] The module $V$ has a universal deformation ring $R(\Lambda, V)$.
\item[ii)] If $P$ is a finitely generated projective $\Lambda$-module, then $\send_{\Lambda}(V\oplus P)\simeq\Bbbk$ and $R(\Lambda, V)=R(\Lambda,V\oplus P)$.
\item[iii)] If $\Lambda$ is a Frobenius algebra, then $\send_{\Lambda}(\Omega(V))\simeq\Bbbk$ and $R(\Lambda, V)=R(\Lambda,\Omega(V))$, where $\Omega(V)$ is the first syzygy of $V$.
\end{enumerate}
\end{thm}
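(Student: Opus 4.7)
The plan is to prove each of the algebras (AE1), (AE2), (AE3) separately, following a common four-step template that matches the paper's division into three propositions. For each $\Lambda$ I would first describe all indecomposable non-projective $\Lambda$-modules together with the stable Auslander--Reiten quiver $_s\Gamma_\Lambda$; since $\Lambda$ has finite representation type this reduces to a combinatorial (string-module) description. For (AE1) the indecomposables are uniserial and $_s\Gamma_\Lambda$ is a single tube, so the $V_i$ are easily enumerated; for (AE2) and (AE3) one writes down the indecomposable string modules $M_j, N_j$ or $U_i, V_i, X_i, Y_i$ explicitly and identifies the $\Omega$-orbits. At this point Theorem~\ref{thm:defor}(iii) is very convenient: since $\Lambda$ is symmetric (hence Frobenius), $R(\Lambda, M) = R(\Lambda, \Omega(M))$, so one only needs to work with one representative per $\Omega$-orbit.

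Next I would determine which of those indecomposables satisfy $\send_\Lambda(M) \cong \Bbbk$. From the combinatorial description of $M$ I compute a basis of $\End_\Lambda(M)$ and identify the subspace $\mathcal{P}_\Lambda(M,M)$ of endomorphisms factoring through a projective; the quotient $\send_\Lambda(M)$ then yields the criterion. The special shape of the AR-quiver dictated by Erdmann's classification forces this to happen only for modules at or near the mouth of exceptional components, giving the lists in the statement. For each such $M$, Theorem~\ref{thm:defor}(i) immediately guarantees that $R(\Lambda, M)$ exists. Computing $\Ext^1_\Lambda(M,M)$ from a minimal projective resolution (again combinatorial) determines the tangent space $t_M$ by \cite[Prop.~2.1]{bv}; the cases $\Ext^1 = 0$ then yield $R(\Lambda, M) = \Bbbk$, handling $V_0, V_{m-1}$ for (AE1) and the modules $M_0, N_0, M_{2m-1}, N_{2m-1}$ and $U_0, V_1, X_m, Y_m$ for (AE2), (AE3).

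The main obstacle is the remaining case $\Ext^1_\Lambda(M,M) \cong \Bbbk$, where one must show that $R(\Lambda, M)$ is \emph{exactly} $\Bbbk\llbracket x\rrbracket/\langle x^{m+1}\rangle$ (for (AE1)) or $\Bbbk\llbracket x\rrbracket/\langle x^{m}\rangle$ (for (AE2), (AE3)), rather than some other quotient of $\Bbbk\llbracket x\rrbracket$. I would proceed in two directions. First, using the techniques of \cite{rv}, I would construct an explicit lift $\widetilde{M}$ of $M$ over the predicted ring $R=\Bbbk\llbracket x\rrbracket/\langle x^n\rangle$ as a free $R$-module equipped with an $R\otimes_{\Bbbk}\Lambda$-structure satisfying the defining relations of $\Lambda$; by the universal property this produces a surjection $R(\Lambda, M)\twoheadrightarrow R$. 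Second, to rule out a lift over $\Bbbk\llbracket x\rrbracket/\langle x^{n+1}\rangle$, I would mount an explicit obstruction argument: assume such a lift exists, use freeness and the combinatorial form of $M$ to pin down the only possible matrix representation of the $\Lambda$-action, and derive a contradiction with one of the defining relations of $\Lambda$ (the relations $(\alpha\beta)^m\alpha$, $(\beta\alpha)^m\beta$, $\rho\alpha$, $\beta\rho$, $\alpha\beta-\rho^m$, or $x^m$ respectively). It is here that the bulk of the work, and the sharpness of the exponent, will come from; the methods of \cite{rv} are tailored precisely to this type of explicit deformation-theoretic calculation.
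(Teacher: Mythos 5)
Your proposal does not address the statement you were asked to prove. Theorem~\ref{thm:defor} is a \emph{general} assertion about an arbitrary self-injective finite-dimensional algebra $\Lambda$ and an arbitrary finitely generated module $V$ with $\send_{\Lambda}(V)\cong\Bbbk$: that a universal deformation ring exists, that it is unchanged by adding projective summands, and that (in the Frobenius case) it is invariant under the syzygy operator $\Omega$. What you have outlined instead is a proof strategy for the paper's main classification result (Theorem~\ref{thm:main}) for the three specific algebras (AE1), (AE2), (AE3). Worse, your plan explicitly \emph{invokes} Theorem~\ref{thm:defor}(i) and (iii) as tools (``Theorem~\ref{thm:defor}(iii) is very convenient\ldots'', ``Theorem~\ref{thm:defor}(i) immediately guarantees that $R(\Lambda,M)$ exists''), so as an argument for Theorem~\ref{thm:defor} itself it is circular. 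In the paper this theorem is not proved at all; it is quoted directly from Bleher and V\'elez-Marulanda \cite[Theorem 2.6]{bv}.

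A genuine proof would have to run along entirely different lines and contains none of the combinatorial string-module material you describe. For part (i) one must show that the deformation functor $\widehat{F}_V$ is continuous and satisfies Schlessinger's criteria, the crucial point being that the hypothesis $\send_{\Lambda}(V)\cong\Bbbk$ forces criterion (H4) (uniqueness in the gluing of lifts over small surjections), so that $\widehat{F}_V$ is representable rather than merely having a versal hull; this is where self-injectivity enters, since for self-injective $\Lambda$ one controls lifts up to projective direct summands via the stable category. Part (ii) requires comparing the deformation functors of $V$ and $V\oplus P$ (lifting the projective summand trivially and showing the induced natural transformation is an isomorphism). Part (iii) rests on the fact that for a Frobenius algebra $\Omega$ induces a self-equivalence of the stable module category, whence $\send_{\Lambda}(\Omega(V))\cong\send_{\Lambda}(V)\cong\Bbbk$, together with a compatibility of $\Omega$ with lifts over Artinian coefficient rings giving a natural isomorphism $\widehat{F}_V\cong\widehat{F}_{\Omega(V)}$. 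None of these steps appears in your proposal, so as it stands there is no proof of the stated theorem.
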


We will use the following theorem, which is just a particular case of a result given due to the second and third authors (see  \cite[Theorem 1.1]{rv}).

\begin{thm}\label{anote} Suppose that $V$ admits a universal deformation ring $R(\Lambda,V)$, which is a quotient of the ring of formal series $\Bbbk \llbracket x \rrbracket$. Moreover, assume that there exists a sequence $\mathcal{L}=\{V_0,V_1,\ldots\}$, where $V_0=V$ and such that for $l>0$ there are a monomorphism $\iota_l:V_{l-1}\rightarrow V_l$ and an epimorphism $\epsilon:V_l\rightarrow V_{l-1}$ for which the composition $\sigma_l:=\iota_l\epsilon_l$ satisfies that $\ker(\sigma_l)\cong V_0$, $\im(\sigma_l^l)\cong V_0$ and $\mathcal{L}$ is maximal respect with these properties.
\begin{enumerate}
    \item[i)] If $\mathcal{L}$ is finite with $\mathcal{L}=\{V_0,V_1,\ldots,V_N\}$, $\dim_{\Bbbk} \Hom_{\Lambda}(V_N,V)=1$ and $\Ext_{\Lambda}^1(V_N,V)=0$, then we have $R(\Lambda,V)\cong \Bbbk\llbracket x\rrbracket/\langle x^{N+1}\rangle$.
    \item[ii)] If $\mathcal{L}$ is infinite, then $R(\Lambda,V)\cong \Bbbk \llbracket x \rrbracket$.
\end{enumerate}
\end{thm}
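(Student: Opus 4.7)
Plan: My strategy is to use the sequence $\mathcal{L}$ to build explicit lifts of $V$ over the truncated rings $R_l := \Bbbk \llbracket x \rrbracket/\langle x^{l+1}\rangle$, and then invoke the universal property, together with the hypothesis that $R(\Lambda, V)$ is a quotient of $\Bbbk \llbracket x \rrbracket$, to pin down which quotient it is.

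For each $l\geq 0$, I would endow $V_l$ with the structure of an $R_l\otimes_\Bbbk\Lambda$-module by letting $x$ act as the nilpotent endomorphism $\sigma_l=\iota_l\epsilon_l$. Since $\im(\sigma_l^l)\cong V_0$ is a submodule of $\ker(\sigma_l)\cong V_0$, the endomorphism $\sigma_l$ satisfies $\sigma_l^{l+1}=0$, so the action is well-defined. The descending filtration $V_l \supseteq \sigma_l V_l \supseteq \sigma_l^2 V_l \supseteq \cdots \supseteq 0$ has successive quotients all isomorphic to $V$, by iterating the short exact sequences $0\to V\to V_l\xrightarrow{\epsilon_l} V_{l-1}\to 0$ provided by $\epsilon_l$. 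This identifies $V_l$ with a free $R_l$-module of rank $\dim_\Bbbk V$, and the residue $\Bbbk\otimes_{R_l} V_l\cong V_l/\sigma_l V_l$ is canonically isomorphic to $V$, so $V_l$ is a lift of $V$ over $R_l$.

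By universality, each such lift yields a ring morphism $\alpha_l:R(\Lambda,V)\to R_l$. The image $\alpha_l(x)$ must act on $V_l$ as $\sigma_l$, and since $\sigma_l^l\neq 0$, the class of $\alpha_l(x)$ in the cotangent space $\langle x\rangle/\langle x^2\rangle$ is nonzero, whence $\alpha_l$ is surjective by Nakayama. Writing $R(\Lambda,V)=\Bbbk \llbracket x \rrbracket/J$, we conclude $J\subseteq \langle x^{l+1}\rangle$ for every $V_l$ in $\mathcal{L}$. In case (ii) the intersection $\bigcap_{l\geq 0}\langle x^{l+1}\rangle=0$ gives $J=0$, and hence $R(\Lambda,V)\cong \Bbbk \llbracket x \rrbracket$.

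For case (i) we already have $J\subseteq \langle x^{N+1}\rangle$; the task is to prove equality, equivalently, that no lift of $V$ over $R_{N+1}$ exists. If such a lift $W$ did exist, the reduction $W/x^N W$ would be a lift over $R_N$, which—by the maximality of $\mathcal{L}$ together with the rigidity coming from $\dim_\Bbbk\Hom_\Lambda(V_N,V)=1$—must be isomorphic to $V_N$. Hence $W$ fits in a short exact sequence $0\to V\to W\to V_N\to 0$, and $\Ext^1_\Lambda(V_N,V)=0$ forces it to split as $\Lambda$-modules, yielding $W\cong V\oplus V_N$ equipped with a nilpotent endomorphism that would furnish the missing $\iota_{N+1},\epsilon_{N+1}$ and extend $\mathcal{L}$, contradicting maximality. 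The main obstacle I expect is precisely this last step: converting the combinatorial maximality of $\mathcal{L}$ and the two hypotheses on $\Hom_\Lambda(V_N,V)$ and $\Ext^1_\Lambda(V_N,V)$ into a rigorous non-existence argument for $V_{N+1}$, and tracking how the module structure on a split extension $V\oplus V_N$ constrains (or prevents) a compatible nilpotent action of $x$ satisfying the imposed kernel and image conditions.
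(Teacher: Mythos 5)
A preliminary remark: the paper does not actually prove Theorem~\ref{anote}; it is imported wholesale as a special case of \cite[Theorem 1.1]{rv}, so there is no internal proof to measure your attempt against. Your architecture --- make each $V_l$ into a lift of $V$ over $R_l=\Bbbk\llbracket x\rrbracket/\langle x^{l+1}\rangle$ by letting $x$ act as $\sigma_l$, deduce surjections $R(\Lambda,V)\twoheadrightarrow R_l$, and in the finite case rule out a lift over $R_{N+1}$ --- is the natural one and is surely the route taken in \cite{rv}. However, three steps are genuinely unproved. First, $\sigma_l^{l+1}=0$ does not follow from the stated hypotheses: ``$\im(\sigma_l^l)\cong V_0$'' is an abstract isomorphism and does not place $\im(\sigma_l^l)$ inside $\ker(\sigma_l)$. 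For instance, with $\Lambda=\Bbbk$, $V_0=\Bbbk$, $V_1=\Bbbk^2$, $\epsilon_1(a,b)=a$ and $\iota_1(a)=(a,a)$, one has $\ker(\sigma_1)\cong\im(\sigma_1)\cong V_0$, yet $\sigma_1$ is a nonzero idempotent. So nilpotence must either be added to the hypotheses or verified in each application; it is not a consequence of them.

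Second, your surjectivity argument for $\alpha_l$ is a non sequitur. For \emph{any} morphism $\alpha_l:R(\Lambda,V)\to R_l$ the induced lift $R_l\otimes_{R(\Lambda,V),\alpha_l}U(\Lambda,V)$ is free over $R_l$, so multiplication by $x\in R_l$ on it automatically satisfies $x^l\neq0$ --- even when $\alpha_l$ factors through $\Bbbk$ and the lift is the trivial one $R_l\otimes_\Bbbk V$. Hence ``$\sigma_l^l\neq0$'' gives no control on $\alpha_l(x)$, and nothing forces the universal parameter to be sent to the chosen generator $x$ of $R_l$ rather than into $\langle x^2\rangle$. What is actually needed is that the first-order truncation $V_l/\sigma_l^2V_l$ is a \emph{non-split} self-extension of $V$, i.e.\ a nonzero class in $t_V\cong\Ext^1_\Lambda(V,V)$; only then do the cotangent-space and Nakayama arguments apply, and this again requires input beyond the stated kernel/image conditions (in the paper's applications it comes from indecomposability of $V_1$). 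Third, case (i) --- the part used in every application here --- is left as an acknowledged ``obstacle'': you do not show that a hypothetical free lift $W$ over $R_{N+1}$ reduces (mod $x^{N+1}$, not $x^N$) to something isomorphic to $V_N$ (a free lift over $R_N$ need not be $V_N$ as a $\Lambda$-module a priori), nor how $\dim_\Bbbk\Hom_\Lambda(V_N,V)=1$, $\Ext^1_\Lambda(V_N,V)=0$ and the maximality of $\mathcal{L}$ combine to forbid a compatible nilpotent $x$-action on $V\oplus V_N$. Until these three points are supplied, the proposal is a correct plan rather than a proof.
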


\subsection{A quick tour about the stable Auslander-Reiten quiver and homomorphisms between string modules}\label{subsec:string}
Assume that $\Lambda=\Bbbk Q/I$ is a bound quiver algebra on a quiver $Q=(Q_0,Q_1,s,t)$, where $I$ is an admisible ideal of $\Bbbk Q$ (see e.g. \cite[Chapter III]{ars}). From now on, we will assume that $\Lambda$ is further a symmetric special biserial algebra (see e.g. \cite[Chapter II]{kebt}). We denote by ${}_{s}\Gamma_{\Lambda}$ the stable Auslander-Reiten quiver of $\Lambda$. 

For any arrow $\alpha\in Q_1$ we denote by $\alpha^{-1}$ the \textit{formal inverse} of $\alpha$. We define a \textit{word of length} $n\geq1$ as the sequence $w_1\cdots w_n$, where the $w_i$ represents either an arrow or the formal inverse of an arrow, with the condition $s(w_{j+1})=t(w_j)$, where $j\in\{1,\ldots,n-1\}$. We define $s(w_1\cdots w_n)=s(w_1)$, $t(w_1\cdots w_n)=t(w_n)$ and $(w_1\cdots w_n)^{-1}=w_n^{-1}\cdots w_1^{-1}$. On the other hand, we define for any vertex $i\in Q_0$ the \textit{empty word} $\boldsymbol{e_i}$ as the word of length 0, such that $s(\boldsymbol{e_i})=t(\boldsymbol{e_i})=i$ and $\boldsymbol{e_i}^{-1}=\boldsymbol{e_i}$. We denote by $\mathcal{W}$ the set of all words in $Q$. Let ``$\sim$'' be the equivalence relation on $\mathcal{W}$ defined by $w\sim w'$ if and only if $w=w'$ of $w^{-1}=w'$. A \textit{string} $C$ is a repesentative of an equivalence class under the equivalence relation $\sim$, which satisfies that either $C=\boldsymbol{e_i}$ for some vertex $i\in Q_0$ or $C=w_1w_{2}\cdots w_n$ with $n\geq1$, $w_{j}\neq w_{j+1}^{-1}$ for $1\leq j\leq n-1$ and no sub-word of $C$ or its formal inverse belongs to $I$. Now, for any pair of strings $C=w_1w_{2}\cdots w_n$ and $D=v_1v_{2}\cdots v_m$ of length $n$ and $m$, respectively, we define the \textit{composition of} $C$ \textit{and} $D$ as the concatenation $CD=w_1\cdots w_nv_1v_2\cdots v_m$ provided that it is a string. Moreover, we set $e_{s(C)}C=Ce_{t(C)}=C$. If $C=w_1\cdots w_n$ is a string of length $n\geq1$, then there exists an indecomposable $\Lambda$-module $M[C]$, called \textit{the string module corresponding to the string representative} $C$ (see e.g. \cite[Section II.3]{kebt}). For every string $C$, it holds that $M[C]\simeq M[C^{-1}]$; if $C=\boldsymbol{e_i}$, then $M[C]=S(i)$ is the simple $\Lambda$-module associated to the vertex $i$.

Assume $C=w_1\cdots w_n$ is a string of length $n\geq1$. We call $C$ a \textit{directed string} if all $w_j$ are arrows, for $1\leq j\leq n$. We say that $C$ is a \textit{maximal directed string} if $C$ is a directed string and $\alpha C\in I$, for any arrow $\alpha\in Q_1$. We denote by $\mathcal{M}$ the set of all maximal directed strings. 

Following e.g. \cite[Section II.5]{kebt}, we say that $C$ \textit{starts on a peak} (resp. \textit{starts in a deep}, \textit{ends on a peak}, \textit{ends in a deep}) if there is no arrow $\alpha\in Q_1$ such that $\alpha C$ (resp. $\alpha^{-1}C$, $C\alpha^{-1}$, $C\alpha$ ) is a string. If $C$ is a string not starting on a peak (resp. not starting in a deep), that is, $\alpha C$ (resp. $\alpha^{-1}C$) is a string for some $\alpha\in Q_1$, then there exists a unique $D\in\mathcal{M}$ such that $C_h:= D^{-1}\alpha C$ (resp. $C_c:= D\alpha^{-1}C$) is a string. We say that $C_h$ (resp. $C_c$) is obtained by $C$ by adding a \textit{hook} (resp. a \textit{co-hook}) \textit{on the left side}. Dually, if $C$ is a string not ending on a peak (resp. not ending in a deep), that is, $C\alpha^{-1}$ (resp. $C\alpha $) is a string for some $\alpha\in Q_1$, then there exists a unique $E\in \mathcal{M}$ such that ${}_{h}C=C\alpha^{-1}E$ (resp. ${}_{c}C=C\alpha E^{-1} $) is a string. We say that ${}_{h}C$ (resp. ${}_{c}C$) is obtained by $C$ by adding a \textit{hook} (resp. \textit{co-hook}) \textit{on the right side}. 

By \cite{br} (see also \cite[Proposition II.5.3]{kebt}), all irreducible morphisms between string modules are either canonical injections $M[C]\rightarrow M[C_h]$, $M[C]\rightarrow M[{}_{h}C]$, or canonical projections $M[C_c]\rightarrow M[C]$, $M[{}_{c}C]\rightarrow M[C]$. 

The homomorphisms between string modules were completely classified by H. Krause in \cite{kra}. We discuss such classification as follows. Let $S$ and $T$ be strings for $\Lambda$. Suppose that $C$ is a substring of both $S$ and $T$ such that the following conditions are satisfied:
\begin{enumerate}
\item[i)] $S\sim DCB$, where $B$ is a substring which is either of length zero or $B=\alpha B'$, for an arrow $\alpha$ and some string $B'$, and $D$ is a substring which is either of length zero or $D=D'\beta^{-1}$, for an arrow $\beta$ and some string $D'$.
\item[ii)] $T\sim FCE$, where $E$ is a substring which is either length zero or $E=\gamma^{-1}E'$ for an arrow $\gamma$ and some string $E'$, and $F$ is a substring which is either of length zero of $F=F'\lambda$, for an arrow $\lambda$ and some string $F'$.
\end{enumerate}
Then, there exists a composition of $\Lambda$-modules homomorphisms $\sigma_C:M[S]\twoheadrightarrow M[C]\hookrightarrow M[T]$. We call $\sigma_C$ a \textit{canonical homomorphism} from $M[S]$ to $M[T]$ that factors through $M[C]$. It follows from \cite{kra} that each $\Lambda$-module homomorphism from $M[S]$ to $M[T]$ can be written uniquely as a $\Bbbk$-linear combination of canonical homomorphisms. In particular, if $M[S]=M[T]$ then the canonical endomorphisms of $M[S]$ generate $\End_{\Lambda}(M[S])$.

\section{The main results}\label{sec:main}

In this section we assume that $\Lambda$ is either one of the algebras (AE1), (AE2) or (AE3). Two very important properties of $\Lambda$ are the following: Firstly, in general, if $\Lambda$ is a periodic finite dimensional algebra, then every non-projective module is periodic with period dividing the period of $\Lambda$. Therefore, the class of algebras satisfying this property encompasses the class of periodic algebras. Secondly, since $\Lambda$ is a special biserial algebra of finite representation type, all the components of ${_s}\Gamma_{\Lambda}$ are exceptional, i.e. they are composed entirely by string modules.

Recall that $\Omega$ represents the first syzygy operator. Since $\Lambda$ is symmetric, it follows that $\Omega$ induces a self-equivalence of $\smod$-$\Lambda$ (see \cite[IV. Proposition~3.8]{ars}) and hence $\send_{\Lambda}(M)\cong \send_{\Lambda}(\Omega(M))$ for all $\Lambda$-modules $M$.

\subsection{Case (AE1)}
Let $\Lambda=\Bbbk\llbracket x \rrbracket/\langle x^{m+1}\rangle$, where $m$ is a positive integer. This algebra also can be presented as the bound quiver algebra $\Bbbk Q/I$, where
$$
Q:= \xymatrix{
0 \ar@(ul,dl)[]_{\alpha}
}
$$
and $I=\langle \alpha^{m+1} \rangle$ for some $m\in\mathbb{Z}^{+}$. Note that the radical series of the unique projective indecomposable module corresponding to the vertex $0$ can be represented as follows:
\begin{equation*}
P(0)=
\begin{matrix}
S(0)\\
\vdots\\
S(0)
\end{matrix} 
\end{equation*}
\noindent
Moreover, we have that $\Lambda_s:=\Lambda/\soc(\Lambda) \cong \Bbbk Q/I_s$, where $I_s=\langle \alpha^{m} \rangle$. For $j\in \{0,\ldots,m-1\}$, we denote by $V_j$ the string module $M[C_j]$, where $C_j:=e_0$ if $j=0$, and $C_j:=\alpha^j$ for $1\leq j\leq m-1$.
%
The stable Auslander-Reiten quiver of $\Lambda$ is as in Figure \ref{Fig1}.

\begin{figure}[ht]
$\xymatrix{
V_0 \ar@/^1pc/[rr] & & V_1 \ar@/^1pc/[ll] \ar@/^1pc/[rr] & & \cdots \ar@/^1pc/[rr] \ar@/^1pc/[ll] & & V_{m-1} \ar@/^1pc/[ll]
}$
\caption{The stable Auslander-Reiten quiver for \textbf{(AE1)}.}
\label{Fig1}
\end{figure}
\noindent
In particular, note that $V_j={}_{h}(V_{j-1})$ and $V_j=(V_{j-1})_c$ for every $j\in \{1,\ldots,m-1\}$.

\begin{prop}\label{tmmod} 
Let $M$ be a non-projective indecomposable module over $\Lambda$. Then, $\send_{\Lambda}(M)\cong \Bbbk$ if and only if $M$ is in the $\Omega$-orbit of a module in the mouth of the unique exceptional tube of ${_s}\Gamma_{\Lambda}$, namely, $M\cong V_0$ or $M\cong V_{m-1}$. In this situation, we have $\Ext_{\Lambda}^1(M,M)=\Bbbk$ and $R(\Lambda,M)\cong \Bbbk \llbracket x\rrbracket/\langle x^{m+1}\rangle$.
\end{prop}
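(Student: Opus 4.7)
My plan is to exploit the fact that $\Lambda\cong\Bbbk[\alpha]/\langle\alpha^{m+1}\rangle$ is a local commutative symmetric algebra, so every indecomposable $\Lambda$-module is uniserial and isomorphic to $V_j=\Lambda/\langle\alpha^{j+1}\rangle$ for some $0\le j\le m$, with $V_m=\Lambda$ the unique projective. Under this identification $\End_{\Lambda}(V_j)\cong\Lambda/\langle\alpha^{j+1}\rangle$, spanned by multiplication by the powers of $\alpha$. Any $\Lambda$-homomorphism $V_j\to\Lambda$ must land inside $\operatorname{ann}_{\Lambda}(\alpha^{j+1})=\langle\alpha^{m-j}\rangle$, so post-composing with a projection $\Lambda\to V_j$ shows that $\mathcal{P}_{\Lambda}(V_j,V_j)$ is the image of $\langle\alpha^{m-j}\rangle$ in $\Lambda/\langle\alpha^{j+1}\rangle$. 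Consequently
$$
\dim_{\Bbbk}\send_{\Lambda}(V_j)=\min(j+1,\,m-j),
$$
which equals $1$ precisely when $j=0$ or $j=m-1$.

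A direct computation of the projective cover of $V_j$ gives $\Omega(V_j)\cong V_{m-1-j}$; in particular $\Omega$ interchanges $V_0$ and $V_{m-1}$, so these two modules form a single $\Omega$-orbit, which matches the description of the mouth of the exceptional tube. By Theorem~\ref{thm:defor}(iii) it therefore suffices to determine $R(\Lambda,V_0)$. For the tangent space I would use the symmetric-case isomorphism $\Ext^1_{\Lambda}(V_0,V_0)\cong\shom_{\Lambda}(\Omega V_0,V_0)=\shom_{\Lambda}(V_{m-1},V_0)$ and repeat the above factorization analysis in the off-diagonal case $V_{m-1}\to V_0$ to obtain $\Ext^1_{\Lambda}(V_0,V_0)\cong\Bbbk$; hence by Theorem~\ref{thm:defor}(i) and the remarks following it, $R(\Lambda,V_0)$ is a quotient of $\Bbbk\llbracket x\rrbracket$.

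To pin down this quotient I would apply Theorem~\ref{anote} to the chain $\mathcal{L}=\{V_0,V_1,\ldots,V_m\}$. For $1\le l\le m$, let $\iota_l:V_{l-1}\hookrightarrow V_l$ be the monomorphism induced by multiplication by $\alpha$ on $\Lambda$ and $\epsilon_l:V_l\twoheadrightarrow V_{l-1}$ the canonical quotient. Then $\sigma_l=\iota_l\epsilon_l$ is just multiplication by $\alpha$ on $V_l$; its kernel is $\soc(V_l)=\langle\alpha^l\rangle/\langle\alpha^{l+1}\rangle\cong V_0$, and the image of $\sigma_l^l$ is this same socle, so the chain conditions are satisfied. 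Since $V_m=\Lambda$ is projective one has $\dim_{\Bbbk}\Hom_{\Lambda}(V_m,V_0)=\dim V_0=1$ and $\Ext^1_{\Lambda}(V_m,V_0)=0$, so Theorem~\ref{anote}(i) will yield $R(\Lambda,V_0)\cong\Bbbk\llbracket x\rrbracket/\langle x^{m+1}\rangle$.

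The most delicate step is verifying the maximality of $\mathcal{L}$, i.e.\ that the chain cannot be extended past $V_m=\Lambda$. Since $\Lambda$ is self-injective, any monomorphism $\iota_{m+1}:\Lambda\hookrightarrow V_{m+1}$ must split, reducing the issue to a case analysis on the endomorphism $\epsilon_{m+1}\iota_{m+1}\in\End_{\Lambda}(\Lambda)\cong\Lambda$; combining this with the constraints that both $\ker(\sigma_{m+1})$ and $\im(\sigma_{m+1}^{m+1})$ be isomorphic to $V_0$ yields a contradiction for $m\ge 1$, terminating $\mathcal{L}$ at length $m+1$.
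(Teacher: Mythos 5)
Your argument is correct and reaches the same conclusion through the same overall architecture as the paper: identify $V_0$ and $V_{m-1}$ as the modules with stable endomorphism ring $\Bbbk$, note they form one $\Omega$-orbit, compute $\Ext^1_{\Lambda}(V_0,V_0)\cong\shom_{\Lambda}(V_{m-1},V_0)\cong\Bbbk$, and feed the chain $\mathcal{L}=\{V_0,\dots,V_m\}$ with $\sigma_l=\iota_l\epsilon_l$ into Theorem~\ref{anote}. Where you genuinely diverge is in the first step: the paper works with the string-module formalism, exhibiting for $0<j<m-1$ a single canonical endomorphism $V_j\twoheadrightarrow V_{j-1}\hookrightarrow V_j$ and arguing via Krause's classification of canonical homomorphisms that it cannot factor through $P(0)$; you instead exploit that $\Lambda$ is commutative local uniserial, identify $\End_{\Lambda}(V_j)\cong\Lambda/\langle\alpha^{j+1}\rangle$, compute $\mathcal{P}_{\Lambda}(V_j,V_j)$ as the image of $\langle\alpha^{m-j}\rangle$, and obtain the closed formula $\dim_{\Bbbk}\send_{\Lambda}(V_j)=\min(j+1,m-j)$. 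This buys you more (the exact dimension of every stable endomorphism ring, not just whether it exceeds $1$) and also repairs a slip in the paper, which asserts $\dim_{\Bbbk}\End_{\Lambda}(V_{m-1})=1$ via ``Schur's Lemma'' when in fact only the stable endomorphism ring is one-dimensional. You also explicitly address the maximality of $\mathcal{L}$ required by Theorem~\ref{anote} (via self-injectivity forcing $\iota_{m+1}$ to split and the resulting constraint on $\epsilon_{m+1}\iota_{m+1}$), a hypothesis the paper leaves unverified; it would be worth writing out that case analysis in full, but the sketch is sound. The trade-off is that your method is special to the commutative case (AE1) and does not generalize to (AE2) and (AE3), whereas the paper's string-combinatorial method is uniform across all three cases.
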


\begin{proof} \mbox{}\\
\textit{Step 1.} Observe that the modules in the mouth of ${}_{s}\Gamma_{\Lambda}$ are $V_0$ and $V_{m-1}$. By the isomorphism $\Omega(V_0)\cong V_{m-1}$ and by Schur's Lemma, we have that 
$$
\dim_{\Bbbk} \End_{\Lambda}(V_0)=\dim_{\Bbbk} \End_{\Lambda}(V_{m-1})=1,
$$
and therefore, $V_0$ and $V_{m-1}$ has stable endomorphism ring isomorphic to $\Bbbk$. Next, assume that $0<j<m-1$. To see that $V_j$ has no stable endomorphism ring isomorphic to $\Bbbk$, consider the canonical endomorphism $f:V_j\twoheadrightarrow V_{j-1}\hookrightarrow{} V_j$. Let $\phi$ and $\psi$ be canonical homomorphisms $\phi:V_j\twoheadrightarrow V_x\hookrightarrow{} P(0)$ and $\psi:P(0)\twoheadrightarrow V_y\hookrightarrow{} V_j$ such that $\psi \phi$ preserves the top submodule $V_{j-1}$ of $V_j$. It follows that $x=j$, because otherwise the image of $V_{j-1}$ under $\phi$ has a non-zero intersection with $\ker{\psi}$, which cannot occur. Moreover, since $\psi \phi$ preserves $V_{j-1}$ and by the composition series of the string module $M[C_j]$, we obtain that $y=m-1$, which is a contradiction. Hence, there are no homomorphisms in $\mathcal{P}_{\Lambda}(V_j,V_j)$ preserving $V_{j-1}$. So, $f\notin \mathcal{P}_{\Lambda}(V_j,V_j)$.

\textit{Step 2.} Note that the $\Bbbk$-vector space $\Hom_{\Lambda}(\Omega(V_0),V_0)=\Hom_{\Lambda}(V_{m-1},V_0)$ is generated by the canonical projection $V_{m-1}\twoheadrightarrow V_0$. By the length of the radical series of $P(0)$, we have that this projection does not belong to $\mathcal{P}_{\Lambda}(\Omega(V_0),V_0)$ and hence
$$\Ext_{\Lambda}^1(V_0,V_0)\cong \shom_{\Lambda}(\Omega(V_0),V_0) \cong \Bbbk.$$
On the other hand, let $\mathcal{L}:=\{V_0,\ldots,V_m\}$, and for $\ell \in \{1,\ldots,m\}$ define $\iota_{\ell}$ as the natural inclusion $V_{\ell-1}\hookrightarrow{} V_{\ell}$, $\epsilon_{\ell}$ as the canonical projection $V_{\ell}\twoheadrightarrow V_{\ell-1}$, and $\sigma_{\ell}:V_{\ell}\rightarrow V_{\ell}$ as the composition $\iota_{\ell}\epsilon_{\ell}$. Then, $\ker \sigma_{\ell}=V_0$ and $\im(\sigma_{\ell}^{\ell})=V_0$. Since $\Hom_{\Lambda}(V_m,V_0)$ is generated by the canonical projection and $\Ext_{\Lambda}^1(V_m,V_0)=0$ because $V_m$ is projective, it follows by Theorem \ref{anote} that $R(\Lambda,V_0)\cong \Bbbk \llbracket x\rrbracket/\langle x^{m+1}\rangle$.
\end{proof}

\subsection{Case (AE2)}


Let $\Lambda$ be the bound quiver algebra $\Bbbk Q/I$, where
$$
Q:= \xymatrix{
0 \ar@/^1pc/[rr]^{\alpha}
& & 1 \ar@/^1pc/[ll]^{\beta}}
$$
and $I=\langle (\alpha \beta)^m\alpha,(\beta\alpha)^m\beta \rangle$ for some $m\geq 1$. For this algebra, the radical series of the projective indecomposable modules have the following form.
\begin{equation*}
P(0)=
\begin{matrix}
S(0)\\
S(1)\\
\vdots\\
S(0)\\
S(1)\\
S(0)
\end{matrix}
\ \ \mbox{ and }\ \ \
P(1)=
\begin{matrix}
S(1)\\
S(0)\\
\vdots\\
S(1)\\
S(0)\\
S(1)
\end{matrix}
\end{equation*}

Set $\Lambda_s:=\Lambda/soc(\Lambda)$. Then, $\Lambda_s\cong \Bbbk Q/I_s$, where $I_s=\langle (\alpha \beta)^m,(\beta \alpha)^m \rangle$. Moreover, we have that ${}_{s}\Gamma_{\Lambda}$ coincides with the Auslander-Reiten quiver $\Gamma_{\Lambda_s}$. For $j\in \{0,\ldots,2m-1\}$ define $M_j$ and $N_j$ to be the string modules $M[C_j]$ and $M[D_j]$, respectively, where $C_j$ and $D_j$ are the string given as follows.
\begin{enumerate}
    \item[$i)$] $C_0=e_0$, $C_1=\alpha$.
    \item[$ii)$] $C_j=(\alpha \beta)^{j/2}$ if $1<j<2m$ and $j$ is even.
    \item[$iii)$] $C_j=(\alpha \beta)^{(j-1)/2}\alpha$ if $1<j<2m$ and $j$ is odd.
    \item[$iv)$] $D_0=e_1$, $D_1=\beta$.
    \item[$v)$] $D_j=(\beta \alpha)^{j/2}$ if $1<j<2m$ and $j$ is even.
    \item[$vi)$] $D_j=(\beta \alpha)^{(j-1)/2}\beta$ if $1<j<2m$ and $j$ is odd.
\end{enumerate}

The stable Auslander-Reiten quiver of $\Lambda$ consists of a unique connected component which is represented graphically in Figure \ref{fig:AR1},
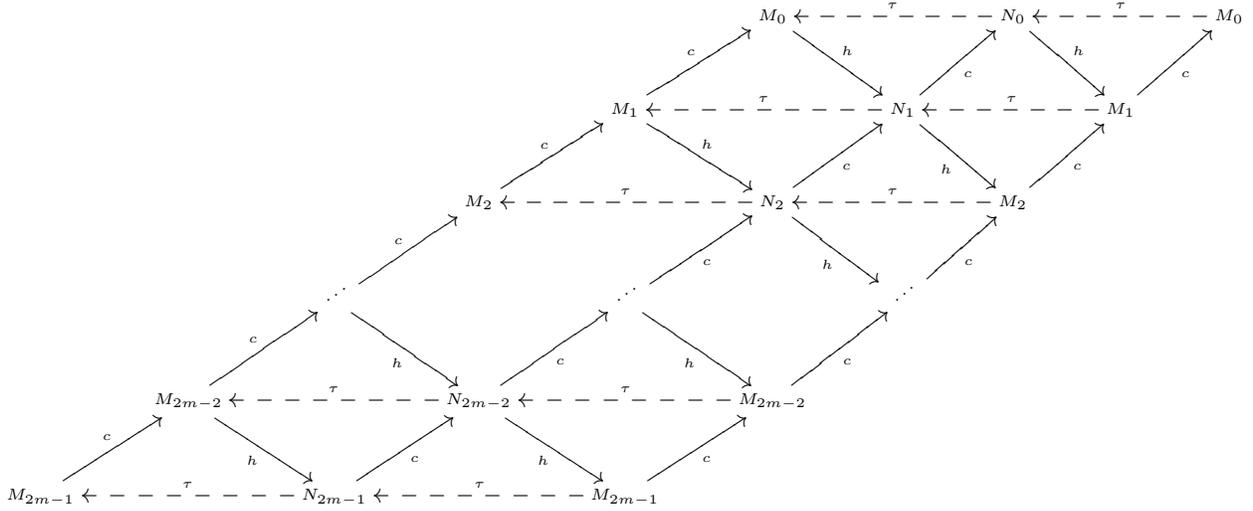
\begin{figure}
{\tiny
$$\xymatrix{
& & & & & M_0 \ar[rd]^h & & N_0 \ar[rd]^h \ar@{-->}[ll]_{\tau} & & M_0 \ar@{-->}[ll]_{\tau} \\
& & & & M_1 \ar[rd]^h \ar[ru]^c & & N_1 \ar[ru]_c \ar[rd]_h \ar@{-->}[ll]_{\tau} & & M_1 \ar[ru]_c \ar@{-->}[ll]_{\tau} & \\
& & & M_2 \ar[ru]^c & & N_2 \ar[ru]_c \ar[rd]_h \ar@{-->}[ll]_{\tau} & & M_2 \ar[ru]_c \ar@{-->}[ll]_{\tau} & & \\
& & \ \rotatebox{35}{$\cdots$} \ar[ru]^c \ar[rd]_h & & \ \rotatebox{35}{$\cdots$} \ar[rd]_h \ar[ru]_c & & \ \rotatebox{40}{$\cdots$} \ar[ru]_c & & & \\
& M_{2m-2} \ar[rd]_h \ar[ru]^c & & N_{2m-2} \ar[ru]_c \ar[rd]_h \ar@{-->}[ll]_{\tau} & & M_{2m-2} \ar[ru]_c \ar@{-->}[ll]_{\tau} & & & & \\
M_{2m-1}\ar[ru]^c & & N_{2m-1} \ar[ru]_c \ar@{-->}[ll]_{\tau} & & M_{2m-1} \ar[ru]_c \ar@{-->}[ll]_{\tau} & & & & & }$$
}
\caption{The stable Auslander-Reiten quiver for \textbf{(AE2)}.}
\label{fig:AR1}
\end{figure}
where it holds that $M_j={}_{c}(M_{j-1})$, $N_j={}_{c}(N_{j-1})$, $N_j=(M_{j-1})_h$, and $M_j=(N_{j-1})_h$ for all $j\in \{1,\ldots,2m-1\}$.

\begin{prop}\label{tc2eser}
Let $M$ be a non-projective indecomposable module over $\Lambda$. Then, the following statements hold.
\begin{enumerate}
    \item[$i)$] $\send_{\Lambda}(M)\simeq \Bbbk$ if and only if $M\cong M_j$ or $M\cong N_j$ for some $j\in \{0,1,2m-2,2m-1\}$.
    \item[$ii)$] Suppose that $\send_{\Lambda}(M)\cong \Bbbk$. Let $j$ be the element of $\{0,1,2m-2,2m-1\}$ such that $M\cong M_j$. Then,
\begin{equation*}
\Ext^1_{\Lambda}(M,M)= \begin{cases}
0 & \text{if $j\in \{0,2m-1\}$,}\\
\Bbbk & \text{if $j\in \{1,2m-2\}$,}
\end{cases}
\end{equation*}
     and the universal deformation ring of $M$ is given by
\begin{equation*}
R(\Lambda,M)= \begin{cases}
\Bbbk & \text{if $j\in \{0,2m-1\}$,}\\
\Bbbk \llbracket x \rrbracket /\langle x^m\rangle & \text{if $j\in \{1,2m-2\}$.}
\end{cases}
\end{equation*}

\end{enumerate}
\end{prop}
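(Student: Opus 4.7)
The overall strategy follows the template of the proof of Proposition \ref{tmmod}, using Schur's lemma, the invariance $\send_\Lambda(M) \cong \send_\Lambda(\Omega(M))$ (which holds since $\Lambda$ is symmetric), Krause's classification of homomorphisms between string modules, and Theorems \ref{thm:defor} and \ref{anote}. The quiver admits an involution $\alpha \leftrightarrow \beta$, $0 \leftrightarrow 1$ swapping the roles of $M_j$ and $N_j$, so it suffices to handle the $M_j$-case throughout. Moreover, by Theorem \ref{thm:defor}(iii), the case $j = 2m-2$ reduces to $j = 1$ through the isomorphism $\Omega(M_1) \cong M_{2m-2}$.

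For part $(i)$, the if-direction proceeds as follows. Since $M_0 \cong S(0)$, Schur's lemma gives $\send_\Lambda(M_0) \cong \Bbbk$. Computing $\Omega(M_0)$ from the projective cover $P(0)$ identifies it with $N_{2m-1}$, so $\send_\Lambda(N_{2m-1}) \cong \Bbbk$ by $\Omega$-invariance; dually $\send_\Lambda(M_{2m-1}) \cong \Bbbk$ via $\Omega(N_0) \cong M_{2m-1}$. For $M_1 \cong M[\alpha]$, a direct enumeration using Krause's recipe shows that every non-scalar canonical endomorphism factors through $P(0)$. For the only-if direction, given $M_j$ with $2 \leq j \leq 2m-3$, I produce an explicit canonical endomorphism $f: M_j \twoheadrightarrow M_{j-2} \hookrightarrow M_j$ (analogous to Step 1 of Proposition \ref{tmmod}) and verify via a length-of-image argument on projective factorizations that $f \notin \mathcal{P}_\Lambda(M_j, M_j)$.

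For part $(ii)$, I use $\Ext^1_\Lambda(M_j, M_j) \cong \shom_\Lambda(\Omega(M_j), M_j)$. When $j = 0$, $\Omega(M_0) = N_{2m-1}$ and every canonical map $N_{2m-1} \to S(0)$ factors through $P(0)$; when $j = 2m-1$, $\Omega(M_{2m-1}) = S(0)$ and $\Hom_\Lambda(S(0), M_{2m-1}) = 0$ since $\operatorname{soc}(M_{2m-1}) = S(1)$. Hence $\Ext^1 = 0$ and $R(\Lambda, M) \cong \Bbbk$ by the general criterion. When $j = 1$, a direct Krause computation yields $\Hom_\Lambda(M_{2m-2}, M_1) = \Bbbk \cdot \pi$, where $\pi$ is the canonical projection; verifying that $\pi$ does not factor through any projective gives $\Ext^1_\Lambda(M_1, M_1) \cong \Bbbk$. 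To pin down $R(\Lambda, M_1)$, I apply Theorem \ref{anote} to the sequence $\mathcal{L} = \{V_l\}_{l=0}^{m-1}$ with $V_l := M_{2l+1}$, taking $\iota_l: M_{2l-1} \hookrightarrow M_{2l+1}$ as the codimension-two submodule inclusion and $\epsilon_l: M_{2l+1} \twoheadrightarrow M_{2l-1}$ as the quotient by the bottom $M_1$-submodule. Direct inspection gives $\ker(\sigma_l) \cong M_1 \cong \im(\sigma_l^l)$; maximality is forced by the relation $(\alpha\beta)^m\alpha \in I$, which blocks extension to $M_{2m+1}$. Since $\dim_\Bbbk \Hom_\Lambda(M_{2m-1}, M_1) = 1$ and $\Ext^1_\Lambda(M_{2m-1}, M_1) = \shom_\Lambda(S(0), M_1) = 0$, Theorem \ref{anote}$(i)$ yields $R(\Lambda, M_1) \cong \Bbbk\llbracket x \rrbracket / \langle x^m\rangle$.

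The main technical obstacle is the careful Krause-based bookkeeping needed to identify which canonical homomorphisms factor through projectives---especially in the only-if direction of $(i)$, where the parity of $j$ alters the shape of the underlying string---and the verification that each $\sigma_l$ in $\mathcal{L}$ has kernel and $l$-th image both isomorphic to $M_1$. Once these structural facts are in place, the application of Theorem \ref{anote} is routine.
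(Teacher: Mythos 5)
Your proposal is correct and follows essentially the same route as the paper's proof: Schur's lemma plus $\Omega$-orbit invariance for the modules $M_j,N_j$ with $j\in\{0,1,2m-2,2m-1\}$, an explicit canonical endomorphism $M_j\twoheadrightarrow M_{j-2}\hookrightarrow M_j$ that does not factor through a projective for the remaining $j$, the identification $\Ext^1_{\Lambda}(M,M)\cong\shom_{\Lambda}(\Omega(M),M)$ for the tangent spaces, and Theorem \ref{anote} applied to the same sequence $V_l=M_{2l+1}$ for the universal deformation ring of $M_1$. The only (immaterial) deviations are that you invoke the $\alpha\leftrightarrow\beta$ involution where the paper uses common $\Omega$-orbits to dispose of the $N_j$, and you compute $\Ext^1_{\Lambda}(M_{2m-1},M_{2m-1})$ directly rather than reducing to $S(0)$.
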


\begin{proof} \mbox{}\\
\textit{Step 1.} First, since the modules $M_0=S(0)$, $N_0=S(1)$, $M_{2m-1}$ and $N_{2m-1}$ are in the same $\Omega$-orbit. and also $M_1$, $N_1$, $M_{2m-2}$ and $N_{2m-2}$ are in a common $\Omega$-orbit, the stable endomorphism ring of each one of them is isomorphic to $\send_{\Lambda}(S(0))$ or $\send_{\Lambda}(M_1)$. By Schur's Lemma, $\send_{\Lambda}(S(0)) \cong \Bbbk$ and, due to there exist no canonical homomorphisms  from $M_1$ to $M_1$ different from the identity, we have $\send_{\Lambda}(M_1) \cong \Bbbk$.

Now, we prove that any other module that does not appear in the table has no stable endomorphism ring isomorphic to $\Bbbk$. Taking representatives of the $\Omega$-orbits, we verify that $\send_{\Lambda}(M_j)\not\cong \Bbbk$ for $j\neq 0,1,2m-1,2m-2$. In fact, for such values of $j$ write $C_j=E\gamma$, where $E$ and $\gamma$ are strings, and $\gamma$ is a path of length $2$. If $j$ is even, then $C_j=\gamma E$. If $j$ is odd, then $C_j=\delta E$, where $\delta=\alpha \beta$ when $\gamma=\beta \alpha$ and $\delta=\beta \alpha$ when $\gamma=\alpha \beta$. In any case, we obtain a canonical homomorphism $f:M_j\hookrightarrow{} M[E] \twoheadrightarrow M_j$. By the form of the projective indecomposable modules and the form of the strings $C_j$ for $j\neq 0,1,2m-1,2m-2$, there are no canonical homomorphisms $\varphi:M_j\rightarrow P(t)$ and $\psi:P(t)\rightarrow M_j$ such that $t\in \{0,1\}$ and $\psi\varphi$ preserves the module $M[E]$. Hence,  $\send_{\Lambda}(M_j)\not\cong \Bbbk$.

\noindent
\textit{Step 2.} For the simple module $S(0)$, we have
$$
\Ext_{\Lambda}^1(S(0),S(0))\cong \shom_{\Lambda}(\Omega(S(0)),S(0))=\shom_{\Lambda}(N_{2m-1},S(0))=0
$$
because there are no canonical homomorphisms from $N_{2m-1}$ to $S(0)$. This implies that $R(\Lambda,S(0))\cong \Bbbk$.

On the other hand, we have that the unique canonical homomorphism from $M_{2m-2}$ to $M_1$ is the canonical projection $\phi:M_{2m-2}\twoheadrightarrow M_1$. There are no canonical homomorphisms $\varphi:M_{2m-2}\rightarrow P$ and $\psi:P\rightarrow M_1$ for a projective indecomposable module $P$ such that $\psi \varphi$ preserves $M_1$. Then, $\phi \notin \mathcal{P}_{\Lambda}(M_{2m-2},M_1)$. Therefore,
$$
\Ext_{\Lambda}^1(M_1,M_1)\cong \shom_{\Lambda}(\Omega(M_1),M_1)=\shom_{\Lambda}(M_{2m-2},M_1)\cong \Bbbk.
$$
To calculate the universal deformation ring of $M_1$ we will be apply Theorem \ref{anote}. Consider the sequence of $\Lambda$-modules $\mathcal{L}:=\{V_0,\ldots,V_{m-1}\}$, where $V_l:=M_{2l+1}$ for $l\in \{0,\ldots,m-1\}$. Also, let $\iota_l:V_{l-1}\hookrightarrow{} V_l$ be the natural inclusion and $\epsilon_l:V_l\twoheadrightarrow V_{l-1}$ the canonical projection, and define $\sigma$ to be the composition $\iota_l\epsilon_l$, for each $l\in \{1,\ldots,m-1\}$. Then, we have $\ker(\sigma_l)=V_0$. Besides, $\im(\sigma_l^t)=V_{l-t}$ for $l\in \{1,\ldots,m-1\}$ and $t\in \{1,\ldots,l\}$. In particular, $\im(\sigma_l^l)=V_0$. Moreover, by a previous calculus, we get $\Hom_{\Lambda}(V_{m-1},V_0)\cong \Bbbk$ and, since there are no canonical homomorphisms from $S(0)$ to $V_1$, we have
$$
\Ext_{\Lambda}^1(V_{m-1},V_0)\cong \shom_{\Lambda}(\Omega(V_{m-1}),V_0)=\shom_{\Lambda}(S(0),V_0)=0.
$$
Thus, by Theorem \ref{anote}, we conclude that $R(\Lambda,M_1)\cong \Bbbk \llbracket x\rrbracket/\langle x^m\rangle$. Finally, the statement follows from Theorem \ref{thm:defor}.

\end{proof}

\subsection{Case (AE3)}

Let $\Lambda$ be the bound quiver algebra $\Bbbk Q/I$, where
$$Q:\ \xymatrix{
0 \ar@(ul,dl)[]_{\rho} \ar@/^1pc/[rr]^{\alpha} && 1 \ar@/^1pc/[ll]^{\beta}}
$$
and $I=\langle \rho\alpha,\beta\rho,\alpha\beta-\rho^m \rangle$ for some $m\geq 2$. We have that the radical series of the projective indecomposable $\Lambda$-modules are the following.
\begin{center}
$P(0)=$
\tabbedCenterstack{& $S(0)$ &\\
$S(0)$ & & \\
&&\\
\vdots & & $S(1)$\\
&&\\
$S(0)$ & & \\
&&\\
 & $S(0)$ &}
\ \ \qquad and\qquad \ \ $P(1)=$
\tabbedCenterstack{$S(1)$\\
$S(0)$\\
$S(1)$}
\end{center}
and hence $\Lambda_s:=\Lambda/soc(\Lambda)\cong \Bbbk Q/I_s$, where $I_s=\langle \alpha\beta,\beta\alpha,\rho^m,\beta\rho,\rho\alpha \rangle$.

Since $\Lambda$ is a self-injective algebra, we obtain the stable Auslander-Reiten quiver $\Gamma_s(\Lambda)$ from the Auslander-Reiten quiver $\Gamma_{\Lambda_s}$.

A complete list of strings of $\Lambda_s$ is given by $\rho^{i-m}$, $\rho^{i-m}\alpha$ and $\beta\rho^{i-m}$, where $i\in \{1,\ldots,m\}$, joint with $\beta\rho^{i-m}\alpha$, where $i\in \{1,\ldots,m-1\}$. Here, $\rho^0:=e_0$. Now, we define the following list of string modules over $\Lambda_s$:

\begin{enumerate}
    \item[$i)$] $V_i=M[\rho^{i-m}]$, with $i\in \{1,\ldots,m\}$. Notice that $V_m=S(0)$.
    
    \item[$ii)$] $X_i=M[\rho^{i-m}\alpha]$, with $i\in \{1,\ldots,m\}$.
    
    \item[$iii)$] $Y_i=M[\beta\rho^{i-m}]$, with $i\in \{1,\ldots,m\}$.
    
    \item[$iv)$] $U_i=M[\beta\rho^{i-m}\alpha]$, with $i\in \{1,\ldots,m-1\}$. Moreover, we put $U_0=S(1)$.    
\end{enumerate}
The stable Auslander-Reiten quiver of $\Lambda$ is presented in Figure \ref{ARcomponent}.
\begin{figure}
{\tiny
$$\xymatrix{
& & & & & & & X_m \ar[d]^c & Y_m \ar[d]^c \ar@{-->}[l]_{\tau} & X_m \ar@{-->}[l]_{\tau}\\
& & & & & & U_{m-1} \ar[ru]^c \ar[d]^c & V_m \ar[ru]_h \ar[d]^c \ar@{-->}[l]_{\tau} & U_{m-1} \ar[ru]^c \ar@{-->}[l]_{\tau} & \\\
& & & & & X_{m-1} \ar[ru]^h & Y_{m-1} \ar[ru]_c \ar[d]^c \ar@{-->}[l]_{\tau} & X_{m-1} \ar[ru]^h \ar@{-->}[l]_{\tau} & & \\
& & & & \ \rotatebox{45}{$\cdots$} \ar[ru]^c & \ \rotatebox{45}{$\cdots$} \ar[ru]_h & \ \rotatebox{45}{$\cdots$} \ar[ru]_h & & & \\
& & & X_2 \ar[ru]^h \ar[d]^c & Y_2 \ar[ru]_c \ar[d]^c \ar@{-->}[l]_{\tau} & X_2 \ar[ru]^h \ar@{-->}[l]_{\tau} & & & & \\
& & U_1 \ar[ru]^c \ar[d]^c & V_2 \ar[ru]_h \ar[d]^c \ar@{-->}[l]_{\tau} & U_1 \ar[ru]^c \ar@{-->}[l]_{\tau} & & & & & \\
& X_1 \ar[ru]^h \ar[d]_c & Y_1 \ar[ru]_c \ar[d]^c \ar@{-->}[l]_{\tau} & X_1 \ar[ru]^h \ar@{-->}[l]_{\tau} & & & & & & \\
U_0 \ar[ru]^c & V_1 \ar[ru]_h \ar@{-->}[l]_{\tau} & U_0 \ar[ru]^c \ar@{-->}[l]_{\tau} & & & & & & & }$$
}
\caption{The stable Auslander-Reiten quiver for \textbf{(AE3)}.}
\label{ARcomponent}
\end{figure}
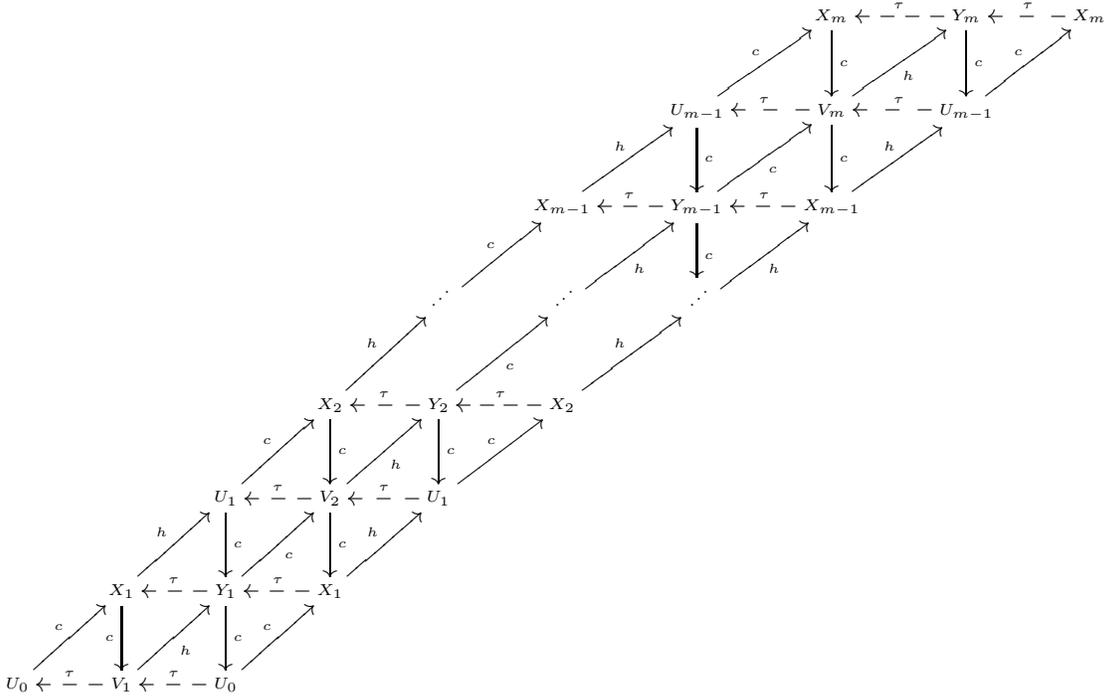
Observe that $U_i=(X_i)_h$, $U_i=(X_{i+1})_c$, $Y_i=(V_{i+1})_c$, and $U_i=\tensor[_h]{Y}{_i}$, for $1\leq i<m$, and $Y_i=(V_i)_h$ and $X_i=\tensor[_h]{V}{_i}$ for $1\leq i\leq m$ are all the modules corresponding to hooks and co-hooks of strings. Notice also that we have the following equalities for the operator $\Omega$:
$$\Omega(V_i)=Y_{m-i+1},\ \Omega(Y_i)=U_{m-i},\ \Omega(U_i)=X_{m-i},\ \Omega(X_i)=V_{m-i+1},$$
$$
\Omega^2(V_i)=U_{i-1},\ \Omega(Y_i)^2=X_i,\ \Omega^2(U_i)=V_i\mbox{ and } \Omega^2(X_i)=Y_i.
$$
\begin{prop}\label{terd3} Let $M$ be a non-projective indecomposable module over $\Lambda$. Then, the following statements hold.
\begin{enumerate}
    \item[$i)$] $\send_{\Lambda}(M)\simeq \Bbbk$ if and only if $M$ is isomorphic of one of the modules $U_0$, $V_1$, $X_m$, $Y_m$, $U_{m-1}$, $V_m$, $X_1$, $Y_1$.
    \item[$ii)$] Suppose that $\send_{\Lambda}(M)\simeq \Bbbk$. Then,
\begin{equation*}
\Ext^1_{\Lambda}(M,M)= \begin{cases}
0 & \text{if $M$ is isomorphic to one of the modules $U_0$, $V_1$, $X_m$, $Y_m$,}\\
\Bbbk & \text{if $M$ is isomorphic to one of the modules $U_{m-1}$, $V_m$, $X_1$, $Y_1$,}
\end{cases}
\end{equation*}
     and the universal deformation ring of $M$ is given by
\begin{equation*}
R(\Lambda,M)= \begin{cases}
\Bbbk & \text{if $M$ is isomorphic to one of the modules $U_0$, $V_1$, $X_m$, $Y_m$,}\\
\Bbbk \llbracket x \rrbracket /\langle x^m\rangle & \text{if $M$ is isomorphic to one of the modules $U_{m-1}$, $V_m$, $X_1$, $Y_1$.}
\end{cases}
\end{equation*}

\end{enumerate}
\end{prop}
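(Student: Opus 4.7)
The plan is to follow the three-step template used in Propositions \ref{tmmod} and \ref{tc2eser}: identify the $\Omega$-orbits among the eight listed modules, rule out $\send_\Lambda(M) \cong \Bbbk$ on every other non-projective indecomposable, and finally compute $\Ext^1_\Lambda(M,M) \cong \shom_\Lambda(\Omega(M), M)$ together with the universal deformation ring via Theorem \ref{anote}. Using the syzygy formulas displayed just before the proposition, the eight listed modules organise into exactly two $\Omega$-orbits of length four, namely $\{U_0, X_m, V_1, Y_m\}$ and $\{U_{m-1}, X_1, V_m, Y_1\}$. Since $\Omega$ is a self-equivalence of $\smod$-$\Lambda$, it suffices to verify $\send_\Lambda \cong \Bbbk$ on one representative per orbit; the simples $U_0 = S(1)$ and $V_m = S(0)$ lie in the two orbits respectively, and Schur's Lemma together with their non-projectivity gives $\send_\Lambda(U_0) \cong \send_\Lambda(V_m) \cong \Bbbk$.

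For the converse direction of part $(i)$, I would loop over a representative in each $\Omega$-orbit of the remaining non-projective indecomposables (that is, intermediate $V_i, X_i, Y_i$ with $2 \le i \le m-1$ and $U_i$ with $1 \le i \le m-2$) and exhibit a non-identity stable endomorphism. For $V_i \cong M[(\rho^{-1})^{m-i}]$ the shift-down map $V_i \twoheadrightarrow V_{i+1} \hookrightarrow V_i$ (quotient by socle, then inclusion as the unique proper submodule of the appropriate dimension) is the natural candidate, and the analogous substring-trimming construction yields similar canonical endomorphisms for $X_i, Y_i, U_i$ via the Krause classification recalled in Subsection \ref{subsec:string}. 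In each case, non-factorisation through $P(0) \oplus P(1)$ is verified by a direct computation: a map $M \to P(t)$ is constrained by the relations $\rho^m = \alpha\beta$, $\rho\alpha = 0 = \beta\rho$ together with the Loewy structures of $P(0)$ and $P(1)$, and the resulting dimension count rules out any composite $M \to P(t) \to M$ that preserves the chosen intermediate substring.

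For part $(ii)$ I would use $\Ext^1_\Lambda(M, M) \cong \shom_\Lambda(\Omega(M), M)$ on one representative per orbit. For the first orbit, $\Omega(U_0) = X_m = M[\alpha]$ has top $S(0) \ne S(1)$, so $\Hom_\Lambda(X_m, S(1)) = 0$ and hence $\Ext^1_\Lambda(U_0, U_0) = 0$; Theorem \ref{thm:defor} then forces $R(\Lambda, U_0) = \Bbbk$. For the second orbit, $\Omega(V_m) = Y_1 = M[\beta(\rho^{-1})^{m-1}]$ has top $S(0) \oplus S(1)$, giving a one-dimensional $\Hom_\Lambda(Y_1, S(0))$ spanned by the canonical projection onto the terminal $S(0)$; a direct check using $\beta\rho = 0$ and $\Hom_\Lambda(P(1), S(0)) = 0$ shows this generator does not factor through $P(0)$ or $P(1)$, so $\Ext^1_\Lambda(V_m, V_m) \cong \Bbbk$. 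To identify $R(\Lambda, V_m) \cong \Bbbk\llbracket x \rrbracket/\langle x^m \rangle$, I apply Theorem \ref{anote} to the chain $\mathcal{L} = \{V_m, V_{m-1}, \dots, V_1\}$, with $\iota_\ell: V_{m-\ell+1} \hookrightarrow V_{m-\ell}$ the unique inclusion as a proper submodule, $\epsilon_\ell$ the quotient by the socle, and $\sigma_\ell = \iota_\ell \epsilon_\ell$; the uniserial structure of each $V_j$ immediately yields $\ker(\sigma_\ell) \cong V_m \cong \im(\sigma_\ell^\ell)$, maximality holds because $(\rho^{-1})^m$ fails to be a valid string in $\Lambda_s$, and one verifies $\dim_\Bbbk \Hom_\Lambda(V_1, V_m) = 1$ and $\Ext^1_\Lambda(V_1, V_m) = \shom_\Lambda(Y_m, S(0)) = 0$ (since the top of $Y_m = M[\beta]$ is $S(1)$). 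Theorem \ref{anote}$(i)$ with $N = m-1$ then gives the claim for $V_m$, and Theorem \ref{thm:defor}$(iii)$ propagates it to the remaining three orbit members.

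The main obstacle is the non-factorisation analysis in the second step: the biserial structure of $P(0)$ combined with the short uniserial $P(1)$ forces a small case split on which arm of $P(0)$ a hypothetical factoring map lands in, and one must check in every case that the image of the chosen intermediate substring inside $P(t)$ is too short for a round-trip to preserve that substring. Once this bookkeeping is complete the rest of the argument is essentially formal, mirroring the pattern already established in Propositions \ref{tmmod} and \ref{tc2eser}.
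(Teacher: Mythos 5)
Your proposal is correct and follows essentially the same route as the paper: the same two $\Omega$-orbits reduced to the simples $S(0)$ and $S(1)$ via Schur's Lemma, the same canonical endomorphism $V_i\twoheadrightarrow V_{i+1}\hookrightarrow V_i$ (shown not to factor through a projective) to exclude the intermediate modules, the same identifications $\Ext^1_\Lambda(M,M)\cong\shom_\Lambda(\Omega(M),M)$ with $\Omega(U_0)=X_m$ and $\Omega(V_m)=Y_1$, and the same chain $\mathcal{L}=\{V_m,\dots,V_1\}$ fed into Theorem~\ref{anote} to get $\Bbbk\llbracket x\rrbracket/\langle x^m\rangle$. The only differences are cosmetic: you spell out the dimension count ruling out factorization of $Y_1\twoheadrightarrow S(0)$ through $P(0)$ or $P(1)$ slightly more explicitly than the paper does.
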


\begin{proof} \mbox{}\\
\textit{Step 1.} First, we determine the indecomposable $\Lambda$-modules $M$ for which $\send_{\Lambda}(M)\cong \Bbbk$. We will prove that such modules are $S(0)$, $S(1)$, $X_1$, $Y_1$, $V_1$, $X_m$, $Y_m$ and $U_{m-1}$, that is the vertices highlighted with red and blue colors in the stable Auslander-Reiten quiver of $\Lambda$ in Figure \ref{ARcomponent}, or equivalently, the modules in the $\Omega$-orbits of the mouths of ${}_{s}\Gamma_{\Lambda}$.

First of all, since $\send_{\Lambda}(S)\cong \Bbbk$ for every simple module $S$, we have that the modules $U_0$, $X_1$, $Y_1$, $V_1$, $U_{m-1}$, $X_m$, $Y_m$ satisfy the desired isomorphism because they are in the $\Omega$-orbit of the simple modules $S(0)$ and $S(1)$.

It only remains to verify that $\send_{\Lambda}(V_i)$ is not isomorphic to $\Bbbk$ for $1<i<m$ (the other modules are in the $\Omega$-orbit of some $V_i$). Let $\phi$ be the canonical homomorphism
$$\phi:\xymatrix{
V_i \ar[rr] & & V_{i+1} \ar[rr] & & V_i}.$$
Then, $\phi$ does not factor trough any projective module, because the unique projective module $P$ satisfying the conditions in subsection \ref{subsec:string} to get a canonical homomorphism $V_i\rightarrow P$ is $P(0)$. Thus, the unique string module $N$ giving a canonical homomorphism $P\rightarrow N\rightarrow V_i$ is $N=V_i$, but the following diagram is not commutative
$$\xymatrix{
V_i \ar[rr] \ar[dd] & & V_{i+1} \ar[rr] & & V_i\\
&&&&\\
V_{i+1 } \ar[rr] & & P(0) \ar[rr] & & V_i \ .\ar[uu]}$$
Therefore, $\dim \send_{\Lambda}(V_i)>1$ and then $\send_{\Lambda}(V_i)$ is not isomorphic to $\Bbbk$.

\noindent
\textit{Step 2.} Now, we determine the tangent spaces $\Ext^1_{\Lambda}(M,M)$ provided that $\send_{\Lambda}(M)\cong \Bbbk$, that is, for $M$ being $U_0$, $X_1$, $Y_1$, $V_1$, $U_{m-1}$, $X_m$, $Y_m$, or $V_m$. Since $\Lambda$ is self-injective, we have $\Ext^1_{\Lambda}(M,M)\cong \shom_{\Lambda}(\Omega(M),M)$ and we just consider the cases $M=S(0)$ and $S(1)$, because $V_1$, $X_m$ and $Y_m$ are in the $\Omega$-orbit of $U_0=S(1)$ and $X_1$, $Y_1$, $U_{m-1}$ are in the $\Omega$-orbit of $S(0)$.

For the first case,
$$\Ext^1_{\Lambda}(S(0),S(0))\cong \shom_{\Lambda}(\Omega(S(0)),S(0))=\shom_{\Lambda}(Y_1,S(0))\cong \Bbbk,$$
where the last equality holds because $\shom_{\Lambda}(Y_1,S(0))$ is generated by the canonical projection.

On the other hand,
$$\Ext^1_{\Lambda}(S(1),S(1))\cong \shom_{\Lambda}(\Omega(S(1)),S(1))=\shom_{\Lambda}(X_m,S(1))=0,$$
where the last equality holds because there is no non-zero homomorphisms from $X_m$ to $S(1)$. Consequently, $\Ext^1_{\Lambda}(M,M)\cong \Bbbk$ for $M=S(1),V_1,X_m,Y_m$, and $\Ext^1_{\Lambda}(M,M)\cong \Bbbk$ for $M=S(0),X_1,Y_1,U_{m-1}$. Hence, by the results in subsection \ref{subsec:def}, the universal deformation ring of $S(1)$, $V_1$, $X_m$ and $Y_m$ is $\Bbbk$.

In the other cases, it is sufficient to calculate the universal deformation ring of $S(0)$. For this, we will apply a result in \cite[Thm. 1.1]{rv}. We define the sequence $\mathcal{L}$ and the homomorphisms $\epsilon_l$, $\iota_l$ and $\sigma_l$ in the following way.
\begin{enumerate}
    \item[$i)$] $\mathcal{L}=\{V_m,V_{m-1},V_{m-2},\ldots,V_1\}$.
    \item[$ii)$] For $1\leq l<t$, the homomorphisms 
    $\epsilon_l:V_{m-l}\twoheadrightarrow V_{m-l+1}$ is defined to be the canonical projection and $\iota_l:V_{m-l+1}\hookrightarrow V_{m-l}$ is defined to be the natural inclusion.

 \item[$iii)$] $\sigma_l:=\iota_l \epsilon_l$.
\end{enumerate}

Next, we will verify that the hypothesis of Theorem \ref{anote} holds for the simple $\Lambda$-module $S(0)$ joint with the sequence $\mathcal{L}$ and the homomorphisms $\sigma_l$.
\begin{enumerate}
    \item[$i)$] From the definition of $\sigma_l$, we have $\ker(\sigma_l)=S(0)$.
    \item[$ii)$] $\im(\sigma_l)=V_{m-l+1}$, $\im(\sigma_l^2)=V_{m-l+2}$, $\im(\sigma_l^3)=V_{m-l+3}$, ... In general, $\im(\sigma_l^n)=V_{m-l+n}$ for all $n\in \{1,\ldots,l\}$ and, in particular, $\im(\sigma_l^l)=V_m=S(0)$.
    \item[$iii)$] $\im(\sigma_l^{l+1})=\sigma_l(\im(\sigma_l^l))=\sigma_l(S(0))=0$.
    \item[$iv)$] $\dim \Hom_{\Lambda}(V_1,S(0))=1$ because $\Hom_{\Lambda}(V_1,S(0))$ is generated as vector space by the canonical projection.
    \item[$v)$] We have that $\Ext_{\Lambda}^1(V_1,S(0))=\shom_{\Lambda}(\Omega(V_1),S(0))=\shom_{\Lambda}(Y_m,S(0))$ which is $0$ because there is no any canonical homomorphism from $Y_m=M[\beta]$ to $S(0)$.

\end{enumerate}

We conclude that $R(\Lambda,M)=\Bbbk \llbracket x \rrbracket/\langle x^{|\mathcal{L}|}\rangle=\Bbbk \llbracket x \rrbracket/\langle x^m\rangle$ for $M=S(0)$, and then also for $M=M[\beta\rho^{-1}\alpha]$, $M=M[\rho^{1-m}\alpha]$ and $M=M[\beta\rho^{1-m}]$.
\end{proof}

\begin{rmk}\label{rmk:1}
It is important to point out that the classifications of universal deformation rings in Propositions \ref{tmmod}, \ref{tc2eser} and \ref{terd3} could be calculated using results by F. Bleher and D. Wackwitz in \cite{bw}, for a more general case. Indeed, these authors compute the universal deformation rings of modules over any Brauer tree algebra by using derived equivalences between these algebras and a certain class of self-injective Nakayama algebras. We have in the first case of the classification that $\Lambda$ is a Brauer tree algebra whose corresponding Brauer tree consists of an edge which is not a loop, one vertex $\zeta_1$ of multiplicity $1$ and one vertex $\zeta_0$ of multiplicity $m$. Graphically this Brauer tree corresponds to
\begin{center}
\begin{tikzpicture}
        \tikzset{vnode/.style={draw,thick,circle,minimum width=8mm,inner sep=0pt}};
        \def\dist{2}
        
        \begin{scope}
            \node [minimum size=2mm,draw] (zeta0) {{\small $\zeta_0$}};

                \path (zeta0) --++ (0:\dist) node [vnode,draw] (zeta1){{\small $\zeta_1$}};
%
%
        \end{scope}
        
        
       \path
          (zeta0) 
                    edge node[left] {{\scriptsize \ \ }} (zeta1)
        ;
\end{tikzpicture}
\ \ \ \ \ \ \ \ \ \ \ \ \ \ \ \
\end{center}

Likewise, we have that in the second case of the classification that $\Lambda$ is a Brauer tree algebra whose associated Brauer tree is the following:
\begin{center}
\begin{tikzpicture}
        \tikzset{vnode/.style={draw,thick,circle,minimum width=8mm,inner sep=0pt}};
        \def\dist{2}
        
        \begin{scope}
            \node [minimum size=2mm,draw] (zeta0) {{\small $\zeta_0$}};

                \path (zeta0) --++ (0:\dist) node [vnode,draw] (zeta2){{\small $\zeta_2$}};
              \path (zeta0) --++ (180:\dist) node [vnode,draw] (zeta0) (zeta1) {{\small $\zeta_1$}};
        \end{scope}
        
        
       \path
          (zeta0) edge node[left] {{\scriptsize }} (zeta1)
                    edge node[left] {{\scriptsize \ \ }} (zeta2)
        ;
\end{tikzpicture}
\ \ \ \ \ \ \ \ \ \ \ \ \ \ \ \
\end{center}
where $\zeta_0$ has multiplicity $m\geq1$ and the other two vertices have multiplicity $1$. Finally, in the third case $\Lambda$ is a Brauer tree algebra whose associated Brauer tree is given as follows:
\begin{center}
\begin{tikzpicture}
        \tikzset{vnode/.style={draw,thick,circle,minimum width=8mm,inner sep=0pt}};
        \def\dist{2}
        
        \begin{scope}
            \node [vnode,draw] (zeta0) {{\small $\zeta_0$}};

                \path (zeta0) --++ (0:\dist) node [vnode,draw] (zeta2){{\small $\zeta_2$}};
              \path (zeta0) --++ (180:\dist) node [minimum size=2mm,draw] (zeta0) (zeta1) {{\small $\zeta_1$}};
        \end{scope}
        
        
       \path
          (zeta0) edge node[left] {{\scriptsize }} (zeta1)
                    edge node[left] {{\scriptsize \ \ }} (zeta2)
        ;
\end{tikzpicture}
\ \ \ \ \ \ \ \ \ \ \ \ \ \ \ \
\end{center}
where $\zeta_1$ has multiplicity $m\geq2$ and the other two vertices have multiplicity $1$.
While previous works by Bleher and Wackwitz provided a general framework for calculating universal deformation rings using derived equivalences, our focus here is on an explicit calculation for these two specific algebras, achieved through representation theory techniques without relying on derived equivalences.
\end{rmk}

\end{document}